\newtheorem{thm}{Theorem}[section]
\newtheorem*{Thm}{Theorem}
\newtheorem{lemma}[thm]{Lemma}
\newtheorem{prop}[thm]{Proposition}
\newtheorem{defin}{Definition}[section]
\DeclareMathOperator{\Hom}{Hom}
\DeclareMathOperator{\im}{im}
\title{Fitting quotients of finitely presented abelian-by-nilpotent groups}
\author{J. R. J. Groves \and Ralph Strebel}
\date{}
\begin{document}
\maketitle
\begin{abstract}We show that every finitely generated nilpotent group of class 2 occurs as the quotient of a finitely presented abelian-by-nilpotent group by its largest nilpotent normal subgroup.\end{abstract}
\pagestyle{myheadings}
\markboth{Fitting quotients of finitely presented abelian-by-nilpotent groups}%
{Fitting quotients of finitely presented abelian-by-nilpotent groups}

\emph{Names and adresses}

J. R. J. Groves, Department of Mathematics and Statistics, University of Melbourne, Parkville, Victoria 3010, Australia, jrjg@unimelb.edu.au

Ralph Strebel, Department of Mathematics, University of Fribourg, 1700 Fribourg, Switzerland,
ralph.strebel@unifr.ch
\smallskip 

\emph{Suggested running title} \\Fitting quotients of finitely presented abelian-by-nilpotent groups

\begin{section}{Introduction} 

Brookes showed in \cite{BRjgt} that a finitely presented abelian-by-polycyclic group is virtually an extension of a nilpotent group by a nilpotent group of class at most two. In this paper we show the following.
 
\begin{Thm} Let $Q$ be a finitely generated nilpotent group of class 2. Then there exists a finitely presented group $G$ with an abelian Fitting subgroup $A$ so that the quotient  $G/A$ is isomorphic to $Q$.
\end{Thm}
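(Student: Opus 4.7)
\emph{Proof plan.} The plan is to construct $G$ as a split extension $G=M\rtimes Q$, where $M$ is a finitely generated $\mathbb{Z}Q$-module to be chosen. The goals are that $G$ is finitely presented and that the Fitting subgroup of $G$ equals $M$. The second condition can be reformulated module-theoretically: if $M\leq N\trianglelefteq G$ and $\bar N=N/M\trianglelefteq Q$, a computation with the lower central series of $N$ shows that $N$ is nilpotent iff $I_{\bar N}^{\,c}\cdot M=0$ for some $c\geq 1$, where $I_{\bar N}\trianglelefteq\mathbb{Z}Q$ is the ideal generated by $\{\bar n-1:\bar n\in\bar N\}$. Since $M$ is abelian and hence nilpotent, $M$ coincides with the Fitting subgroup of $G$ iff
\begin{equation*}
  I_{\bar N}\not\subseteq\sqrt{\Ann_{\mathbb{Z}Q}M}\quad\text{for every non-trivial }\bar N\trianglelefteq Q.\tag{$*$}
\end{equation*}

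I would first reduce to $Q$ torsion-free, since every finitely generated nilpotent group of class $2$ is a quotient of a torsion-free one (e.g.\ of $F_n/\gamma_3(F_n)$), and a construction for the cover can be pulled back along a suitable central kernel. For torsion-free $Q$ the group ring $\mathbb{Z}Q$ is a Noetherian domain (Farkas--Snider), so $I_{\bar N}^{\,c}\neq 0$ for every non-trivial $\bar N$ and every $c\geq 1$. Mimicking the metabelian model, in which the Baumslag--Solitar group $\mathbb{Z}[1/2]\rtimes\mathbb{Z}$ realises $Q=\mathbb{Z}$ as a Fitting quotient via $M=\mathbb{Z}[t^{\pm1}]/(t-2)$, I would take $M=\mathbb{Z}Q/J$ for a two-sided ideal $J$ engineered so that (i) $\sqrt J$ contains no $I_{\bar N}$ with $\bar N\neq 1$, and (ii) $M$ is tame in the sense that $M\rtimes Q$ is finitely presented. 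Condition~(ii) is the analogue for nilpotent $Q$ of the Bieri--Strebel tameness condition for abelian $Q$, and can be verified by producing a finite generating set for the relation module of $M$.

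Once $M$ is constructed, a finite presentation of $G$ follows from a finite presentation of $Q$ together with a finite module-generating set for $M$ and the finite set of module relators supplied by tameness; the Fitting property $(*)$ is then verified directly from the explicit form of $J$. The principal obstacle is the construction of $J$ itself: tameness pushes $\mathbb{Z}Q/J$ towards being small, which tends to introduce annihilators, whereas $(*)$ forbids $\sqrt J$ from absorbing any augmentation ideal $I_{\bar N}$. Balancing these requirements over the non-commutative Noetherian domain $\mathbb{Z}Q$, and exploiting the commutator structure of the class-$2$ nilpotent $Q$ to break the symmetries that would otherwise force $\sqrt J$ to swallow some $I_{\bar N}$, is the delicate technical content of the theorem.
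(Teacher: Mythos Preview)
Your proposal correctly identifies the framework (split extension, a tameness criterion for finite presentability, the Fitting condition via annihilators), but it is not a proof: the construction of $J$ is entirely absent, and you say as much in your final paragraph. Declaring that one will ``engineer $J$'' to be simultaneously tame and to keep every $I_{\bar N}$ out of its radical is a restatement of the problem, not a solution to it. The paper's substance lies precisely in carrying out such a construction, and it occupies several sections.

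There is also a gap in your reduction step. You pass to a torsion-free cover $\tilde Q\twoheadrightarrow Q$ and propose to ``pull back'' the construction along the central kernel. But a $\tilde Q$-module does not become a $Q$-module along a surjection $\tilde Q\to Q$; module actions pull back the other way. Nor can you simply quotient $\tilde G=\tilde M\rtimes\tilde Q$ by the kernel $K$: the splitting copy of $K$ inside $\tilde G$ need not be normal (it need not centralise $\tilde M$), and its normal closure will meet $\tilde M$, disturbing both the module structure and the Fitting property. The paper does not attempt this reduction. Instead it works directly with $Q$, writing $Q$ as a subdirect product of a finite group and finitely many generalized Heisenberg groups $Q_i$, building for each factor an explicit module $B_i$ via iterated HNN extensions (not a single cyclic module $\mathbb ZQ/J$), and setting $A=\bigoplus_i B_i$. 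Tameness is verified through the invariant $\Delta(Q;A)=\bigcup_i\Delta(Q;B_i)$; the genuine difficulty is that each $\Delta(Q_i;B_i)$ contains half-lines, and their union across factors can produce a full line. The paper resolves this with a symplectic-geometry argument (Section~\ref{Symplectic-spaces}) that inductively selects a symplectic basis for each Heisenberg factor so that the contributing subspaces are mutually transverse. That choice of bases is the ``delicate technical content'' you allude to, and nothing in your sketch supplies it.
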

 
 If $Q$ were a finitely generated abelian group then this result would be a very straightforward one.  But the arguments needed here cannot be a simple extension of those straightforward techniques. 
 In \cite{jrjgcjbb5}, Brookes and  the first author have  proved a theorem which implies, amongst other things, that if a finitely presented abelian-by-nilpotent group is subdirectly irreducible, then the quotient by the Fitting subgroup is  virtually a central product of generalized Heisenberg groups. 
 (By a {\it generalized Heisenberg} group we mean a group which is torsion-free nilpotent of class at most 2 and has cyclic centre; this includes the infinite cyclic group.) Thus our arguments here will require groups which are subdirect  products in a non-trivial way.
 
 The basic idea of the proof is straightforward. We express the nilpotent group $Q$ as a subdirect product of subdirectly irreducible groups $Q_i$ and for each of those groups, we produce a module $B_i$ so that the extension of this module by $Q_i$ is finitely presented. The group $G$ is then a split extension of $Q$ by the direct sum $A$ of the modules $B_i$. 
The heart of the paper is in showing how to choose these modules so that the group $G$ is finitely presented. 
We do this with the aid of a theory, developed by the second author, 
of $\Sigma$ invariants for modules over nilpotent groups.

Some of the ideas in Sections 2 and 3 of the paper arose initially in conversation with Chris Brookes. 
We take this opportunity to thank him for his contribution. 
We are also grateful to the referee for his or her careful reading of the paper.
\end{section}
\begin{section}{Subdirect products of nilpotent groups}
The following result is presumably well-known. 
\begin{prop} \label{subdir} Let $H$ be a finitely generated nilpotent group of class at most 2.
\begin{enumerate}[(i)]
\item $H$ is a subdirect product of a finite group and a finite number of torsion-free groups 
having infinite cyclic centre.
\item If $H$ has infinite cyclic centre then, for some $k\ge 0$, it has a presentation with generators $\{x_1, \dots x_k, y_1, \dots, y_k, z\}$ and with relations
\begin{equation}\begin{split} \label {hk}[x_i, x_j]&=[y_i,y_j]=1 \text{ for all }1\le i < j\le k,\\
 [x_i,y_j]&=z^{\delta_{ij}m_i}\text{ for all }1\le i, j\le k,\\
 [x_i, z]&= [y_i,z] = 1 \text{ for all } 1 \leq i \leq k 
 \end{split}
 \end{equation}
 for suitable non-zero integers $m_i$.
\end{enumerate}
\end{prop}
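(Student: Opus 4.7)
My plan for (i) is to reduce to the torsion-free case and then induct on Hirsch length. The torsion subgroup $T = T(H)$ is finite because $H$ is finitely generated nilpotent, and $H$ contains a normal torsion-free subgroup $K$ of finite index (take the normal core of any torsion-free finite-index subgroup, which exists by Mal'cev). The diagonal $H \to H/T \times H/K$ has kernel $T \cap K = 1$, so $H$ is a subdirect product of the torsion-free group $H/T$ and the finite group $H/K$. It thus suffices to decompose a torsion-free finitely generated nilpotent group of class at most 2 as a subdirect product of torsion-free nilpotent groups of class at most 2 with infinite cyclic centre.

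For the torsion-free case I would induct on the Hirsch length. If $Z(H) \cong \mathbb{Z}$ (or $H = 1$) we are done. Otherwise $Z(H) \cong \mathbb{Z}^n$ with $n \geq 2$, and I pick two primitive elements $z_1, z_2 \in Z(H)$ with $\langle z_1\rangle \cap \langle z_2\rangle = 1$ (e.g.\ basis elements of a free-abelian decomposition of $Z(H)$). Each $\langle z_i\rangle$ is then isolated in $H$: if $h^n = z_i^m$, then $[h,g]^n = [h^n,g] = 1$ for every $g$, and since $Z(H)$ is torsion-free this forces $h \in Z(H)$, after which primitivity of $z_i$ gives $h \in \langle z_i\rangle$. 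Hence each $H/\langle z_i\rangle$ is torsion-free of class at most 2 with Hirsch length $h(H) - 1$, and combining the inductive decompositions of the two quotients yields one for $H$.

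For (ii), I would first observe that an infinite cyclic centre forces $H$ to be torsion-free: a torsion element $t$ satisfies $[t,g]^n = [t^n, g] = 1$ for suitable $n$, hence $[t,g] = 1$ for all $g$, so $t \in Z(H) \cong \mathbb{Z}$ and $t = 1$. The same commutator calculation shows $H/Z$ is torsion-free, and it is abelian since $H$ has class $\leq 2$, so $H/Z \cong \mathbb{Z}^m$. The commutator induces a non-degenerate alternating form $\beta \colon H/Z \times H/Z \to Z \cong \mathbb{Z}$; non-degenerate alternating integer forms only exist in even rank, so $m = 2k$. The Smith normal form for alternating integer matrices then provides a symplectic basis $\bar x_1, \bar y_1, \ldots, \bar x_k, \bar y_k$ of $H/Z$ with $\beta(\bar x_i, \bar y_j) = m_i\delta_{ij}$ and $\beta(\bar x_i, \bar x_j) = \beta(\bar y_i, \bar y_j) = 0$, for nonzero integers $m_i$. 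Choosing lifts $x_i, y_i \in H$ makes the relations (\ref{hk}) hold.

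To finish (ii) I still need to verify that (\ref{hk}) actually presents $H$. Let $G$ be the abstract group defined by this presentation; the relations hold in $H$, giving a surjection $G \twoheadrightarrow H$. In $G$ every commutator of generators is central, so each element reduces to a normal form $x_1^{a_1} \cdots x_k^{a_k} y_1^{b_1} \cdots y_k^{b_k} z^c$, giving a surjection $\mathbb{Z}^{2k+1} \twoheadrightarrow G$. The composition $\mathbb{Z}^{2k+1} \to H$ is the Mal'cev parametrisation of $H$ and is bijective, so both surjections are bijective and $G \cong H$. The main technical input is the symplectic Smith normal form for integer alternating matrices in (ii); everything else is routine bookkeeping via Hirsch length and Mal'cev bases.
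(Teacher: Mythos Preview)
Your argument is correct and follows essentially the same route as the paper. For (i) the paper also reduces to the torsion-free case (using residual finiteness rather than Mal'cev to find the finite-index normal subgroup avoiding $T$) and then inducts on Hirsch length by quotienting out central subgroups; the only cosmetic difference is that the paper kills $r$ corank-one direct summands of $Z(H)$ while you kill two rank-one summands. For (ii) the paper simply quotes the existence of a symplectic basis for $H/Z$ and lifts it, so your explicit verification that $H$ and $H/Z$ are torsion-free and that the lifted generators really do present $H$ is more detailed than the paper's own proof, but the underlying idea is identical.
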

A group which is presented as above will be called a {\em generalized Heisenberg group of rank $k$} and the set $\{x_1, y_1, \dots, x_k,y_k\}$ will be called a {\em symplectic basis} for $H$. Note that for $k=0$, such groups are infinite cyclic.
\begin{proof} (i) 
Because $H$ is finitely generated nilpotent it has a finite torsion subgroup $T$ and it is residually finite. Thus there is a normal subgroup $N$ of finite index which meets $T$ trivially 
and so $H$ is the subdirect product of the torsion-free group $Q =H/T$ and $H/N$. 
The centre $Z$ of $Q$ is torsion-free, hence free abelian, say of rank $r$.
There exist therefore $r$ direct summands $N_i$ of $Z$ having rank $r-1$ and intersecting in $\{1\}$.
Put $Q_i = Q/N_i$. Then $Q$ is a subdirect product of the torsion-free quotient groups $Q_i$.
Since Êthe Hirsch length of each $Q_i$ is smaller than that of $Q$, 
we may assume inductively
that each quotient $Q_i$ is a subdirect product of subdirectly irreducible torsion-free quotient groups $Q_{ij}$.
So $Q$ is the subdirect product of all the $Q_{ij}$ 
whence $H$ is the subdirect product of the subdirectly irreducible torsion-free quotient groups $Q_{ij}$ 
and the finite quotient group $H/N$.
Notice that the previous argument shows 
that the centre of a subdirectly irreducible torsion-free quotient is infinite cyclic.
\smallskip

 (ii) Suppose that $H$ has cyclic centre $Z$ generated by $z$. Then there is a symplectic form on $H/Z$ given by, for $x,y\in H$,
$$(xZ, yZ)=m \text{ exactly when } [x,y]=z^m$$
It is a standard result (see, for example, \cite[$\S$ 5]{Bou07}) 
that $H/Z$ then has a symplectic basis as $\mathbb Z$-module. Lifting this symplectic basis to $H$ we arrive at exactly the type of presentation described in the proposition. 
\end{proof}
Observe that  there is room for choice of symplectic basis in the proof of the second part of the proposition. We will make use of this later. 
\end{section}
\begin{section}{Modules for generalized Heisenberg groups}\label{modules}
A similar construction to the one described here appears in the thesis of Dugal Ure \cite{UreMscThesis}. 
We have included details here both for completeness and because we know of no easily accessible source.
Suppose that $H$ is a generalized Heisenberg group of rank $k\geq 0$. Choose a generating set  $\{x_1, \dots x_k, y_1, \dots, y_k, z\}$ for $H$ and suppose that $H$ then has presentation (\ref{hk}). We shall describe the construction of a module $A$ for $H$ so that the split extension $G$ of $A$ by $H$ is finitely presented and $A$ is the Fitting subgroup of $G$.

We shall, in fact, use HNN extensions to construct inductively groups $G_0$, \ldots, $G_k$ 
so that $G_k$ is the required group $G$ and we will recover $A=A_k$ as a normal subgroup of $G_k$.

Let $L$ be the free abelian group with basis $\{ x_1, \dots, x_k\}$; 
let $A_{-1}$ be the free $\mathbb ZL$-module with basis $\mathcal B=\{a_1, a_2\}$ and let $G_{-1}$ be the split extension of $A_{-1}$ by $L$.  Then $G_{-1}$ has  a presentation with generators $\{x_1, \dots, x_k, a_1, a_2\}$ and relations
\begin{equation}  \label{pres1}  [x_i, x_j]=1\  (1\le i <  j\le k)\qquad [a',a^w]=1\ (a,a' \in \mathcal B, w\in L)  \end{equation}
Using this presentation, it is easy to see that the assignments $x_i\mapsto x_i,  a_1\mapsto a_1a_2, a_2 \mapsto a_1a_2^2$ extend to an endomorphism of $G_{-1}$. It is equally simple to write down an inverse for this endomorphism and so it is an automorphism $\phi_0$ of $G_{-1}$. Let $G_{0}$ be the HNN extension (in this case, split extension) of $G_{-1}$ corresponding to $\phi_0$ and with stable letter $z$. Thus $G_0$  has an extra generator $z$ and extra relations
\begin{equation}  \label{pres2}[x_i,z]=1\  (1\le i\le k),\qquad  a_1^z=a_1a_2,\ a_2^z=a_1a_2^2. \end{equation}

Suppose now that we have constructed, by a series of HNN extensions, a group $G_{r-1}$ with the generators of $G_0$ together with new generators $y_1, \dots, y_{r-1}$ and the relations of $G_0$ together with new relations 
\begin{equation}  \label{pres3} [x_i, y_j]= z^{\delta_{ij}m_i},\  [y_j,z]=1, \ \ [y_j,y_l]=1,\ \qquad  a^{y_j}=aa^{x_j} \end{equation}
where $1\le i\le k$,  $1\le j,  l\le r-1$ and $a\in \mathcal B$.

We now construct $G_r$. 
Consider the assignment $\phi_r$ 
that fixes $z$, $y_1$, \ldots, $y_{r-1}$ and each  $x_i \in L$ except $x_r$,
and satisfies  $x_r \mapsto x_rz^{m_r}$ and $a\mapsto aa^{x_r}$ for $a\in \mathcal B$. 
It is easily verified that $\phi_r$ preserves the relations of $G_{r-1}$ as given in (\ref{pres1}), (\ref{pres2}) and (\ref{pres3}). Thus $\phi_r$ induces an endomorphism of $G_{r-1}$. 
To see that $\phi_r$ is injective, 
observe firstly that $G_{r-1}$ is
obtained by a series of ascending HNN-extensions with stable letter $z$ or some $y_i$, 
each of which is fixed by $\phi_r$. 
Thus it will suffice to establish that the restriction of $\phi_r$ to $G_{-1}$ is injective. 
Now $\phi_r$  induces an automorphism of the group $L$, 
hence a ring automorphism of the group ring $\mathbb{Z}L$;
on the other hand,
since $\mathbb{Z}L$ is an integral domain, 
the assignments $a_1 \mapsto a_1 a_2$ and $a_2 \mapsto a_1 a_2^2$ 
induce an injective endomorphism of the $L$-module $A_{-1}$. 
It follows that $\phi_r \colon G_{-1} \to G_{r-1}$ is injective.

Thus we can again construct an HNN-extension $G_r$ corresponding to $\phi_r$ and with stable letter $y_r$. 
This group $G_r$ has a presentation with generators 
$\{x_1,\dots, x_k, y_1, \dots, y_r, z, a_1, a_2\}$ and relations  (\ref{pres1}) and (\ref{pres2}) together with (\ref{pres3}), but with $r-1$ replaced by $r$.
Repeating this process, we eventually arrive at the group $G_k$ with the generating set $\{x_1,\dots, x_k, y_1, \dots, y_k, z, a_1, a_2\}$ and relations given by (\ref{pres1}) and (\ref{pres2}) together with (\ref{pres3}), 
but with $r-1$ replaced by $k$. 
Let $A=A_k$ denote the normal closure of $\mathcal B$ in $G=G_k$. 
The group $A$ contains the subgroup $A_{-1}$ which is abelian;
as $A$ is obtained from $A_{-1}$ by a sequence of localizations, $A$ itself is abelian.
From the presentation, it is easy to verify that $G/A\cong H$. Also, if there were a nilpotent normal subgroup of $G$ larger than $A$, then some non-trivial element of the centre of $H$ would have to act nilpotently on $A$. But it is clear from the definition that no non-trivial power of $z$ acts nilpotently on $A$. 
Thus $A$ is the Fitting subgroup of $G$. 
The final step, the justification that $G$ is finitely presented, will be deferred to the next section.
\end{section}
\begin{section}{Geometric invariants \\for modules over nil\-po\-tent groups}
\subsection{Definition of the invariants}
We shall give only a very brief description of the invariants here. 
The subject is treated in detail and with proofs in a paper of the second author \cite{RSfpan}.

Let $Q$ be a finitely generated group. By a {\em character} of $Q$ we mean a homomorphism $Q \rightarrow \mathbb R$; that is, an element of the dual $Q^*$ of $Q$. 
Then $Q^*$ is a finite dimensional real vector space.

The usual version of the invariant we want to use is obtained by identifying characters which differ by multiplication by a positive real 
and so replacing the vector space $Q^*$ by a sphere. 
Most of the arguments in this paper will require a vector space, however, 
and so we will translate results and terminology from \cite{RSfpan} accordingly.

Let $A$ denote a finitely generated $\mathbb ZQ$-module. 
The subset $\Delta(Q; A)$ of $Q^*$ consists of the equivalence classes of the complement
$S(G) \smallsetminus \Sigma^0(Q;A)$ of the set $\Sigma^0(Q;A)$ defined in \cite{RSfpan}. 
More directly, if $\chi\in Q^*$ 
then $\chi \in \Delta(Q;A)$ if and only if,
either $A$  is not finitely generated as $\mathbb ZQ_{\chi}$-module, or $\chi = 0$.
Here $Q_{\chi}$ is the submonoid of $Q$ consisting of the elements with non-negative $\chi$-value.

In \cite{RSfpan}, the term {\em tame} is defined with respect to an arbitrary central series of a group. We shall use it, however, only for the very simple case of the lower central series of a group of class 2. 
We therefore define it only in this special case. 

\begin{defin}\label{tame}
Suppose that $Q$ is a nilpotent group of class 2 and $A$ is a $\mathbb ZQ$-module with finite generating set $\mathcal A$. We say that $A$ is {\em tame} (for the lower central series) if both $\Delta(Q; A)$ and $\Delta(Q'; \mathcal  A\cdot \mathbb ZQ')$ contain no lines (equivalently, no diametrically opposite points).
\end{defin}

The key theorem we shall use is the following, which is a special case of a much more general result appearing as Theorem 4.1  of \cite{RSfpan};
\begin{thm} 
\label{RSthm}With the notation above, if $A$ is tame then the split extension of $A$ by $Q$ is finitely presented. \end{thm}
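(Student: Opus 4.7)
The strategy is to construct a finite presentation of the split extension $G = A \rtimes Q$ directly. Fix a finite generating set $\mathcal{Q}$ of $Q$, so that $\mathcal{A} \cup \mathcal{Q}$ generates $G$. Because $Q$ is polycyclic it admits a finite presentation $\langle \mathcal{Q} \mid R_Q \rangle$, and the action of $\mathcal{Q}^{\pm 1}$ on $\mathcal{A}$ contributes a finite block of action relations of the form $a^q = w_{a,q}(\mathcal{A})$. The remaining defining relations of $G$ are the commuting relations $[a, (a')^{w}] = 1$ for $a, a' \in \mathcal{A}$ and $w \in Q$, which are \emph{a priori} infinite in number; the task is to derive them from a finite subset of themselves.

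The hypothesis that $\Delta(Q;A)$ contains no lines means that for every nonzero $\chi \in Q^*$, at least one of $\chi, -\chi$ lies outside $\Delta(Q;A)$, i.e., $A$ is finitely generated as a $\mathbb{Z}Q_{\chi}$-module. A compactness argument on the unit sphere of $Q^*$ then produces finitely many such characters $\chi_1,\ldots,\chi_n$ whose positive open cones cover $Q^* \setminus \{0\}$. From this covering one derives, by the original Bieri--Strebel argument that proves the theorem when $Q$ is abelian, that all commuting relations indexed by elements $w \in Q$ with nontrivial image in $Q/Q'$ follow from finitely many instances together with $R_Q$ and the action relations.

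The relations remaining unaccounted for are those indexed by $w \in Q'$, since every character of $Q$ vanishes on $Q'$ and so the outer tame condition says nothing about this range. This is precisely what the second tame hypothesis supplies: that $\Delta(Q'; \mathcal{A}\cdot\mathbb{Z}Q')$ contains no lines expresses Bieri--Strebel tameness of the finitely generated $\mathbb{Z}Q'$-module $\mathcal{A}\cdot\mathbb{Z}Q'$ over the finitely generated abelian group $Q'$, so the same covering argument inside $(Q')^*$ reduces the inner commuting relations to a finite subset. The principal obstacle --- and the technical heart of the general result in \cite{RSfpan} --- is to orchestrate the two reductions compatibly so that a single finite relation set simultaneously captures the outer $Q/Q'$-action on $A$ and the inner $Q'$-governed commutator structure; the statement quoted here is the class-$2$ nilpotent special case of that theorem.
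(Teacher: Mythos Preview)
The paper does not actually prove this theorem: it is quoted, without proof, as a special case of Theorem~4.1 of \cite{RSfpan}, and is used as a black box in the rest of the argument. So there is no ``paper's own proof'' to compare your proposal against.

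That said, your outline is a reasonable heuristic description of how the argument in \cite{RSfpan} is structured, and you are honest that the hard part---making the outer reduction over $Q/Q'$ and the inner reduction over $Q'$ interact coherently---is deferred to that reference. As it stands, though, your text is a sketch rather than a proof: the claim that commuting relations indexed by $w$ with nontrivial image in $Q/Q'$ all follow from finitely many, ``by the original Bieri--Strebel argument'', glosses over the fact that in the non-abelian case the action relations $a^q = w_{a,q}(\mathcal{A})$ interact with the commutator structure of $Q$ in a way that the abelian argument does not have to address. If the intent is merely to indicate the shape of the proof and cite \cite{RSfpan} for the details---which is exactly what the paper does---then a one-line citation would be more appropriate than a paragraph-long sketch that still leans on the same citation for its substance.
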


\subsection{Calculating the invariant of the examples}\label{invex}
We now turn to showing that the group $G$, constructed in the previous section, is finitely presented. Clearly, by Theorem \ref{RSthm}, it suffices to show that the module $A$ constructed there for the subdirectly irreducible group $H$ is tame. We begin with the trivial cases. If $H$ is finite or infinite cyclic then $A$ is finitely generated as abelian group and so $\Delta(H; A)$ consists only of  $\{0\}$. (In fact, in this case, the extension of $A$ by $H$ is polycyclic.)

We shall use the following criterion for calculating $\Delta$ 
which has been adapted from the more general situation of  \cite{RSfpan}. 
\begin{quote} 
\emph{Let $A$ be a cyclic module with generator $a$. Then $\chi \notin \Delta(Q; A)$ if and only if there exists $\alpha \in \mathbb ZQ$ so that $a=a\alpha$ and $\chi(q)>0$ for each $q$ in the support of $\alpha$.}
\end{quote}

In the case when $H$ is a generalized Heisenberg group of rank $k>0$, 
then we observe that either element $a$ of $\{a_1, a_2\}$ is a generator for $A$ as $\mathbb ZQ$-module 
and that the defining relations of $H$  imply that $a$ is annihilated by each of the elements
\begin{equation}
\label{eq:Annihilating-elements-of-a1}
1 + x_1-y_1,\quad 1 + x_2-y_2, \ldots,   1 + x_k-y_k.
\end{equation}
(Note that we regard $A$ as {\em right} $\mathbb ZQ$-module.)
These annihilating elements and the previously quoted criterion
allow us to find an upper bound for $\Delta(H; A)$ as follows.

Fix an index $i \in \{1, \ldots, k\}$ and let $\chi \colon H \to \mathbb R$ be a character
that assumes its minimum only once on the support $\{1, x_i, y_i \}$ of $1 + x_i - y_i$.
If the minimum occurs at 1 then we rewrite the equation $a \cdot (1 + x_i - y_i) = 0$ in the form 
$a = a (-x_i + y_i)$  and deduce from the criterion 
that $\chi \notin \Delta(H;A)$ if $\chi(x_i) > 0$ and $\chi(y_i) > 0$.
If the minimum is taken on $x_i$ we use that $a$ is annihilated by 
$(1 + x_i - y_i) \cdot x_i^{-1} = 1 + x_i^{-1} - y_i x_i^{-1} $ 
and infer that $\chi \notin \Delta(H;A)$ if $\chi(x_i^{-1}) > 0$ and $\chi(y_i x_i^{-1}) > 0$;
if it is taken on $y_i$ we argue similarly.
We conclude that $\chi$ can only belong to $\Delta(H;A)$ if it assumes its minimum at least twice 
on the support of each of the elements $1 + x_i - y_i$.

It follows that  $\Delta(H;A)$ is contained in the union of $3^k$ subsets.
To describe these sets concisely,
we introduce characters $\chi_i$, $\psi_i$, $\phi_i\in H^*$ defined by 
$$\chi_i(x_i)=1,\quad \psi_i(y_i)=1, \quad \phi_i(x_i)=\phi_i(y_i)=-1,$$ 
all values on other generators $x_j$ or $y_j$ being zero. 
The subsets are then as described by the first statement of the next proposition;
the second statement is a simplified version of the first which will be used later.

\begin{prop}\label{deldesc}
 Let $H$ and $A$ be as defined in Section 2 and let $\chi_i$, $\psi_i$ and $\phi_i = -\chi_i - \psi_i$  in $H^*$ be as before. Then
\begin{enumerate}[(i)]
\item $\Delta(H; A)$ lies in the union of the $3^k$ sets, each defined as the convex cone generated by a set of the form $\{\lambda_1, \dots, \lambda_k\}$ where each $\lambda_i$ is an element of $\{\chi_i, \psi_i, \phi_i\}$.
\item  $\Delta(H; A)$ lies in the union of the $3^k$ subspaces each spanned by a set of the form $\{\lambda_1, \dots, \lambda_k\}$ where each $\lambda_i$ is an element of $\{\chi_i, \psi_i, \phi_i\}$. 
\end{enumerate}
\end{prop}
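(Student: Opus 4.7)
The plan is to unpack the ``minimum attained at least twice'' criterion established in the paragraphs immediately before the proposition, and translate it into the language of the rays through $\chi_i$, $\psi_i$, $\phi_i$.

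First I would observe that $z$ is a commutator in $H$ (since $z^{m_i}=[x_i,y_i]$), so any character $\chi\in H^*$ satisfies $\chi(z)=0$. Consequently $\chi$ is determined by its values on $x_1,\dots,x_k,y_1,\dots,y_k$, and $\{\chi_1,\dots,\chi_k,\psi_1,\dots,\psi_k\}$ is an $\mathbb R$-basis of $H^*$. Write $\chi=\sum_{i=1}^k(s_i\chi_i+t_i\psi_i)$ with $s_i=\chi(x_i)$ and $t_i=\chi(y_i)$.

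Next, for each fixed $i$ I would carry out the case analysis forced by the hypothesis that $\chi$ attains the minimum of $\{0,s_i,t_i\}$ at least twice. There are exactly three possibilities: (a) $s_i=0$ and $t_i\ge 0$, so the $i$-th component $s_i\chi_i+t_i\psi_i$ lies on the ray $\mathbb R_{\ge 0}\psi_i$; (b) $t_i=0$ and $s_i\ge 0$, so it lies on $\mathbb R_{\ge 0}\chi_i$; (c) $s_i=t_i\le 0$, in which case $s_i\chi_i+t_i\psi_i=(-s_i)\phi_i$ lies on $\mathbb R_{\ge 0}\phi_i$. Thus, for each $i$, the projection of $\chi$ onto the $2$-plane spanned by $\{\chi_i,\psi_i\}$ is a non-negative multiple of one of $\chi_i$, $\psi_i$, $\phi_i$.

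Combining the independent choices across $i=1,\dots,k$ gives $3^k$ alternatives. Because the pairs $\{\chi_i,\psi_i\}$ for distinct $i$ span independent $2$-dimensional coordinate subspaces, any selection $\lambda_i\in\{\chi_i,\psi_i,\phi_i\}$ produces $k$ linearly independent vectors, and a character satisfying all $k$ constraints simultaneously is exactly a non-negative combination $\sum_{i=1}^k c_i\lambda_i$ with $c_i\ge 0$. This is the convex cone generated by $\{\lambda_1,\dots,\lambda_k\}$, proving (i). Part (ii) is then immediate: each such cone is contained in the linear subspace spanned by $\{\lambda_1,\dots,\lambda_k\}$.

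There is no real obstacle here; the conceptual work was done in identifying the annihilators $1+x_i-y_i$ and deriving the ``minimum twice'' condition. The only thing to be careful about is the elementary case analysis on the sign pattern of $(s_i,t_i)$, and the observation that the index-by-index constraints decouple because they involve disjoint pairs of coordinates of $H^*$.
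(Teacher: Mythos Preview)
Your proposal is correct and is precisely the explicit unpacking that the paper leaves implicit: the paper derives the ``minimum attained at least twice on $\{0,\chi(x_i),\chi(y_i)\}$'' condition in the paragraph preceding the proposition and then simply asserts that this yields the $3^k$ cones, whereas you spell out the three-way case split on $(s_i,t_i)$ and the decoupling across indices. One cosmetic point: $z$ itself need not be a commutator (only $z^{m_i}$ is), but since $m_i\neq 0$ and the target is $\mathbb R$ your conclusion $\chi(z)=0$ still follows.
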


\subsection{The invariant for the module in the theorem}
\label{ssec:Modules-theorem}
The $Q$-module $A$ occurring in the statement of our theorem will be obtained 
by combining three constructions.
To begin with,
the given group $Q$ is expressed as a subdirect product of a finite group $Q_1$
and generalized Heisenberg groups $Q_2$, \ldots, $Q_m$.
For each factor $Q_i$ one constructs a tame $Q_i$-module $B_i$, pulls its action back to $Q$
and then chooses  $A$ to be the direct sum $B_1 \oplus \cdots \oplus B_m$.
To calculate $\Delta(Q,A)$ one then uses the direct sum formula 
\begin{equation}
\label{eq:Direct-sum}
\Delta(Q;A_1 \oplus A_2) = \Delta(Q;A_1) \cup \Delta(Q;A_2);
\end{equation}
it is valid for every couple $A_1$, $A_2$ of finitely generated $Q$-modules (cf.\, \cite[Lemma 2.2]{RSfpan}).
This leaves us with the calculation of the invariants $\Delta(Q;B_i)$.
Each projection $\pi_i \colon Q \twoheadrightarrow Q_i$ induces an injective linear map 
$\pi_i^* \colon Q_i^* \rightarrowtail Q^*$ and 
\begin{equation}
\label{eq:Pullback}
\Delta(Q;B_i) = \pi_i^*\left(\Delta(Q_i;B_i)\right);
\end{equation}
(cf.\, \cite[Lemma 2.5]{RSfpan}).
The calculation of $\Delta(Q;A)$ can thus be reduced 
to that of the subsets $\Delta(Q_1;B_1)$, \ldots, $\Delta(Q_m; B_m)$.

Formula \eqref{eq:Pullback} implies that $\Delta(Q; B_i)$ contains no line if $\Delta(Q_i; B_i)$ has this property;
but such a conclusion cannot be drawn from formula \eqref{eq:Direct-sum}: 
if $\Delta(Q;A_1)$ and $\Delta(Q;A_2)$ both contain a half-line their union can be a line.

If $Q_i$ is finite or infinite cyclic this problem does not arise, 
since $B_i$ can then be chosen so that $\Delta(Q_i;B_i)$ is reduced to the origin.
If, however, $Q_i$ is a Heisenberg group of rank $k > 0$ 
no easy solutions seems available.
The solutions proposed in Section \ref{Proof-theorem}
will involve the choice of a symplectic basis of a normal subgroup $P$ of $Q_i$ of finite index
and the module $B_i$ will be an induced module $B \otimes_P \mathbb{Z}Q_i$ with $B$ a $P$-module of the kind constructed in Section \ref{modules}. 
The set $\Delta(Q_i; B \otimes_P \mathbb{Z}Q_i)$ can then be computed with the help of the formula
\begin{equation}
\label{eq:Induced}
\iota^*\left(\Delta(Q_i; B \otimes_P \mathbb{Z}Q_i) \right)= \Delta(P;B) \cap \im \iota^*.
\end{equation}
In the above $\iota \colon P \leq Q_i$ denotes the inclusion 
and $\iota^*$ is the induced linear map $Q_i^* \rightarrowtail  P^*$
(see \cite[Lemma 2.4]{RSfpan}). 
\end{section}
\begin{section}{Symplectic spaces}
\label{Symplectic-spaces}
Let $\mathbb F $ be a subfield of $\mathbb R$ 
and  $(V, \beta)$ a finite dimensional vector space over $\mathbb F$
equipped with a non-degenerate skew-symmetric bilinear form $\beta \colon V \times V \to \mathbb F$.
In the sequel, $(V, \beta)$ will be referred to as a \emph{symplectic space}.
We shall make use of the following basic properties of $(V, \beta)$:
\begin{enumerate}
\item[a)] $V$ is even dimensional, say $\dim_{\mathbb F}V = 2k$;
\item[b)] $V$ admits a  basis $\{e_1, f_1, \ldots, e_k, f_k\}$, 
called \emph{symplectic}, such that  
\[
\beta(e_i, _j) = \beta (f_i, f_j) = 0 \text{ and }\beta(e_i, f_j) = \delta_{i,j} \text{ for all } i, j;
\]
\item[c)] for every subspace $U$ the \emph{annihilator} 
\[
U^\perp = \{ v \in V \mid \beta(v,u) = 0 \text{ for all } u \in U \} 
\]
has dimension $2k - \dim U$ and $(U^\perp)^\perp = U$
\end{enumerate}
(see, for instance, \cite{Vin03}, Corollary 5.60, Theorem 5.59 and  Proposition 5.43).

A subspace $U$ of $V$ with $U \subseteq U^\perp$ is called \emph{isotropic};
 it is said to be \emph{Lagrangian} if it is isotropic and not contained in a larger isotropic subspace.
It is clear that every isotropic subspace is contained in a Lagrangian subspace; 
moreover, the formula $\dim U^\perp = \dim V - \dim U$ (see c) above) implies 
that every Lagrangian subspace is of dimension $\tfrac{1}{2} \dim V$.
If $\{e_1, f_1, \ldots, e_k, f_k\}$ is a symplectic basis of the symplectic space $(V,\beta)$
the subspace $U$ spanned by $e_1$, \ldots, $e_k$ is Lagrangian;
conversely, 
every Lagrangian subspace is of this form (easy induction on $k$). 

As before,
let $(V, \beta)$ be a symplectic space of dimension $2k$ over a subfield $\mathbb F $ of $\mathbb R$
and let $\Omega$ be a finite collection of subspaces of $V$ each having dimension at most $k$.
In the arguments of this section, we will use without further comment the well-known fact 
that a finite dimensional vector space over an infinite field 
cannot be the union of a finite collection of proper subspaces.
 \begin{lemma}
There is a Lagrangian subspace $L$ of $V$ so that $L\cap W=\{0\}$ for each $W\in \Omega$. \end{lemma}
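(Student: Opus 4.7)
My plan is to argue by induction on $k$, reducing the dimension of the ambient symplectic space by $2$ at each step by quotienting out a carefully chosen hyperbolic pair. The base case $k=0$ is trivial, and for $k \ge 1$ the task reduces to producing one good nonzero vector $e$ and then invoking the induction hypothesis inside the symplectic space $V_1 = e^\perp/\mathbb{F}e$ of dimension $2(k-1)$.

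The key step is the choice of $e$. I want to pick a nonzero $e \in V$ such that
\[
e \notin \bigcup_{W \in \Omega} W \quad\text{and}\quad e \notin \bigcup_{W \in \Omega,\; \dim W = k} W^\perp .
\]
Each of the subspaces on the right is proper in $V$: the $W$'s are proper since $\dim W \le k < 2k$, and when $\dim W = k$ the annihilator $W^\perp$ also has dimension $k$. Since $\mathbb{F}$ is an infinite field (being a subfield of $\mathbb{R}$) and only finitely many proper subspaces are being excluded, such an $e$ exists.

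With this $e$ fixed, the quotient $V_1 := e^\perp / \mathbb{F}e$ inherits a non-degenerate symplectic form $\beta_1$ from $\beta$, and for each $W \in \Omega$ I consider the image
\[
\overline W \;:=\; \bigl( W \cap e^\perp \bigr) \big/ \bigl( W \cap \mathbb{F}e \bigr) \;\subseteq\; V_1.
\]
Since $e \notin W$, the denominator vanishes, so $\overline W \cong W \cap e^\perp$. A two-case dimension count shows $\dim \overline W \le k-1$: when $\dim W < k$ this is obvious, and when $\dim W = k$ the condition $e \notin W^\perp$ forces $W \not\subseteq e^\perp$, so $W \cap e^\perp$ is a proper subspace of $W$. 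This places the collection $\{\overline W\}$ exactly in the setting of the induction hypothesis applied to $(V_1,\beta_1)$, yielding a Lagrangian $L_1 \subset V_1$ of dimension $k-1$ meeting every $\overline W$ trivially.

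Finally I lift: let $L$ be the preimage of $L_1$ in $e^\perp$, so $\dim L = k$ and $L \supseteq \mathbb{F}e$. Since $L \subseteq e^\perp$ and the induced form on $V_1$ agrees with $\beta$ on representatives lying in $e^\perp$, the subspace $L$ is isotropic, hence Lagrangian by the dimension count recalled in the excerpt. For $v \in L \cap W$ one has $v \in W \cap e^\perp$, whose image in $V_1$ lies in $L_1 \cap \overline W = \{0\}$, forcing $v \in \mathbb{F}e \cap W = \{0\}$. The only non-routine ingredient is the care needed in choosing $e$ to handle the Lagrangian $W$'s (those with $\dim W = k$); without the extra exclusion of the annihilators $W^\perp$, the dimension drop inside $V_1$ fails and the induction collapses.
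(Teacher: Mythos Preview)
Your proof is correct and follows essentially the same route as the paper's: induction on $k$ via the symplectic reduction $e^\perp/\mathbb{F}e$, with $e$ chosen outside the $W$'s and suitable $W^\perp$'s, then lifting the Lagrangian obtained inductively. The only cosmetic difference is that the paper (after discarding $W=0$) excludes $e$ from \emph{every} $W^\perp$, whereas you exclude only those $W^\perp$ with $\dim W = k$; this is a harmless economy since for $\dim W<k$ no dimension drop is needed.
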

\begin{proof} 
We shall prove the result by induction on $k$; 
if $k=1$ it is immediate.
We can clearly suppose that each $W\in \Omega$ is non-zero; 
as $\beta$ is non-degenerate, 
this implies that each annihilator $W^{\perp}$ is a proper subspace.
Thus we can find $u\in V$ so that, for each $W\in \Omega$,
the vector  $u$ belongs to neither $W$ nor $W^{\perp}$. 
Let $U$ denote the subspace spanned by $u$ and let $U^{\perp}$ be its annihilator. 
Then $U$ has dimension 1 and $U^{\perp}$ has dimension $2k-1$. 
Further, as $u\notin W^{\perp}$ it follows that each $W\in \Omega$ is not a subspace of $U^{\perp}$. 
Thus $W\cap U^{\perp}$ will have dimension $\dim(W)-1$ 
and its image $\hat W$ in $U^{\perp}/U$ will also have dimension $\dim(W)-1$.

The symplectic form $\beta$ induces a skew-symmetric form on the $2(k-1)$-dimensional space $U^{\perp}/U$;
since $(U^\perp)^\perp = U$ by property c),
the induced form is non-degenerate.
Moreover,  
the set $\hat \Omega=\{\hat W : W \in \Omega\}$ is a finite set of subspaces of dimension at most $k-1$. 
Thus the inductive hypothesis provides a Lagrangian subspace $\hat L$ of $U^{\perp}/U$ 
so that $\hat L\cap \hat W=\{0\}$. Let $L$ be the lift of $\hat L$ in $U^{\perp}$. 
If $W\in \Omega$, then $L\cap W$ must lie in $U$ and so must be zero as $u\notin W$.
\end{proof}

The next lemma is the technical heart of this section. Let $L$ be a Lagrangian subspace of $V$ and suppose that each element of $\Omega$   has trivial intersection with $L$. Choose a basis $\{e_1, \dots. e_k\}$ for $L$ and extend it to a symplectic basis $\{e_1,f_1,\dots, e_k,f_k\}$ for $V$.

Given $\mu=(\mu_1, \dots,\mu_k)  \in [0,1]^k$, let $K_{\mu}$ be the span of the vectors 
$$\{ (1-\mu_1)e_1+\mu_1f_1, \dots , (1-\mu_k)e_k+\mu_kf_k\}$$
Thus if $\mu=(0, \dots,0)$ then $K_{\mu}=L$ and if  $\mu=(1, \dots,1)$ then $K_{\mu}$ is the span of $\{f_1, \dots,f_k\}$.

\begin{lemma}
\label{lem:Properties-K-sub-mu}
The family $K_\mu$ of subspaces has the following properties:
\begin{enumerate}[(i)]
\item for every $\mu \in [0,1]^k$ the subspace $K_\mu$ is  Lagrangian;
\item for every $\mu \in (0,1]^k$ the subspace $K_\mu$ is transversal to $L$;
\item if every subspace in $\Omega$ has dimension $k$ then  for every $\mu$ sufficiently close to $(0, \ldots, 0)$ 
the subspace $K_\mu$ is transversal to every subspace in $\Omega$.
\end{enumerate}
\end{lemma}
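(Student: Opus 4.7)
The plan is to dispatch (i) and (ii) by direct computation in the basis $\{e_1, f_1, \ldots, e_k, f_k\}$ and to reduce (iii) to the nonvanishing of a single polynomial in $\mu$.

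For (i), I would set $v_i = (1-\mu_i)e_i + \mu_i f_i$ and compute $\beta(v_i, v_j)$ directly: bilinearity together with the relations $\beta(e_i, e_j) = \beta(f_i, f_j) = 0$ and $\beta(e_i, f_j) = \delta_{ij}$ forces $\beta(v_i, v_j) = 0$ for $i \neq j$, while skew-symmetry disposes of the case $i = j$. Next I would check linear independence of $\{v_1, \ldots, v_k\}$: a relation $\sum c_i v_i = 0$ yields $c_i(1 - \mu_i) = 0$ and $c_i \mu_i = 0$, and since $(1 - \mu_i) + \mu_i = 1$ the two factors cannot both vanish, so $c_i = 0$. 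This shows $K_\mu$ is isotropic of dimension $k = \tfrac{1}{2} \dim V$, hence Lagrangian.

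For (ii), I would note that a vector $v = \sum c_i v_i$ of $K_\mu$ lies in $L$ (the span of $e_1, \ldots, e_k$) exactly when its $f_i$-coordinates $c_i \mu_i$ all vanish; if $\mu_i > 0$ for every $i$, this forces $c_i = 0$, hence $K_\mu \cap L = \{0\}$, and two subspaces of dimension $k$ whose intersection is trivial span $V$.

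The main obstacle is (iii), but it too should succumb to a continuity argument. For each $W \in \Omega$ (of dimension $k$ by hypothesis) I would fix a basis $w_1, \ldots, w_k$ of $W$ and form the $2k \times 2k$ matrix $M_W(\mu)$ whose rows are the coordinates of $w_1, \ldots, w_k, v_1, \ldots, v_k$ in $\{e_1, f_1, \ldots, e_k, f_k\}$. The determinant $D_W(\mu)$ is polynomial in the entries of $\mu$, and $D_W(\mu) \neq 0$ is equivalent to $W + K_\mu = V$, or equivalently (by dimension count, using (i)) to $W \cap K_\mu = \{0\}$. Since $K_0 = L$ and $L \cap W = \{0\}$ by assumption, $D_W(0) \neq 0$; thus $\{\mu : D_W(\mu) \neq 0\}$ is an open neighbourhood of $0$ in $\mathbb{R}^k$. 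Intersecting these open sets over the finite collection $\Omega$ gives a single neighbourhood of $(0, \ldots, 0)$ on which $K_\mu$ is transversal to every $W \in \Omega$, as required.
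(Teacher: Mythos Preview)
Your proposal is correct and follows essentially the same approach as the paper: both establish (i) and (ii) by direct verification that the $v_i$ are pairwise $\beta$-orthogonal and span a $k$-dimensional complement to $L$ when all $\mu_i > 0$, and both prove (iii) by the same determinant-continuity argument (forming $\det(v_1,\ldots,v_k,w_1,\ldots,w_k)$, noting it is nonzero at $\mu=0$ because $L\cap W=\{0\}$, and intersecting the resulting open neighbourhoods over the finite set $\Omega$). The only differences are cosmetic: you spell out the linear-independence and intersection computations in (i)--(ii) more explicitly, and you phrase (iii) in terms of open sets rather than choosing $\varepsilon_W$ and taking the minimum.
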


\begin{proof}
(i) and (ii).
For every index $i$, the vectors $e_i$ and $f_i$ span a symplectic plane; 
these planes are orthogonal to each other.
It follows that each subspace $K_\mu$ is isotropic and of dimension $k$, hence Lagrangian.
Moreover, 
if $\mu \in (0,1]^k$ the subspace $L + K_\mu$ contains a basis of $V$ 
whence the Lagrangian subspaces $L$ and $K_\mu$ must be transversal.

(iii)
Let $W$ be a subspace in $\Omega$ and let $w_1$, \ldots, $w_k$ a basis of $W$.
Consider the function
\[
g \colon [0,1]^k \to \mathbb R, 
\quad 
\mu \mapsto \det ((1-\mu_1)e_1 + \mu_1 f_1, \ldots, (1-\mu_k)e_k + \mu_k f_k , w_1, \ldots, w_k).
\]
This function is continuous and is non-zero at $(0, 0, \ldots, 0)$ (since $W$ is a complement of $L$).
Hence there exists an interval $[0,\varepsilon_W]$ such that $g$ does not vanish on $[0,\varepsilon_W]^k$.
Set $\varepsilon = \min \{\varepsilon_W \mid W \in \Omega \}$.
For every $\mu$ with $0 < \mu_i \leq \varepsilon$ for $1 \leq i \leq k$,
the subspace $K_\mu$ is then a Lagrangian subspace 
that is transversal to $L$ and to every subspace in $\Omega$.
\end{proof}

\begin{defin} \label{assoc} Let $\mathcal C=\{\chi_1, \psi_1, \dots \chi_k,\psi_k\}$ be a symplectic basis 
for $V$. 
Then the {\em associated subspaces} corresponding to the basis $\mathcal C$ are all subspaces of the form  
$\langle\lambda_1, \dots, \lambda_k\rangle$ where each $\lambda_i$ is an element of 
$\{\chi_i, \psi_i, \chi_i+\psi_i\}$.
\end{defin}
Note that, in this language,  
the set of subspaces described in (ii) of Proposition \ref{deldesc} is the set of subspaces associated to the basis $ \{\chi_1, \psi_1, \dots \chi_k,\psi_k\}$ of $H^*$ 
which is dual to the basis  $\{x_1, y_1, \dots, x_k, y_k\}$ of $H$. (Here $\mathbb F  =\mathbb R$). 
\begin{prop}
\label{thm:Existence-simultaneous-complement}
There exists a symplectic basis $e_1$, $\hat{f}_1$, \ldots, $e_k$, $\hat{f}_k$ 
such that every subspace associated to it has trivial intersection with each subspace in $\Omega$.
\end{prop}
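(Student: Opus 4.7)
My plan is to use the two preceding lemmas in sequence and then fine-tune the choice of symplectic basis so that every associated subspace coincides with some member of the family $K_\mu$ from Lemma~\ref{lem:Properties-K-sub-mu} for a parameter $\mu$ close to the origin. First I would invoke the first lemma of this section to obtain a Lagrangian subspace $L$ of $V$ with $L\cap W=\{0\}$ for every $W\in\Omega$; fix a basis $\{e_1,\ldots,e_k\}$ of $L$ and extend it to a symplectic basis $\{e_1,f_1,\ldots,e_k,f_k\}$, placing us in the set-up of Lemma~\ref{lem:Properties-K-sub-mu}.

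The main obstacle is that Lemma~\ref{lem:Properties-K-sub-mu}(iii) only controls $K_\mu\cap W$ when $\dim W=k$. I would circumvent this by enlarging each $W\in\Omega$ with $\dim W<k$ to a $k$-dimensional subspace $\widetilde W\supseteq W$ that still satisfies $\widetilde W\cap L=\{0\}$. Such an enlargement exists because $W\cap L=\{0\}$ forces the projection $V\twoheadrightarrow V/L$ to embed $W$ into the $k$-dimensional quotient; any $k$-dimensional extension of its image (in particular, all of $V/L$) can be lifted back to $V$. Writing $\widetilde\Omega$ for the enlarged family, Lemma~\ref{lem:Properties-K-sub-mu}(iii) then produces $\varepsilon>0$ such that $K_\mu\cap\widetilde W=\{0\}$ for all $\mu\in[0,\varepsilon]^k$ and all $\widetilde W\in\widetilde\Omega$, whence also $K_\mu\cap W=\{0\}$ for every $W\in\Omega$.

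Finally I would realize every associated subspace as $K_\mu$ for some $\mu\in[0,\varepsilon]^k$. Choose a real number $N$ so large that $1/(N+1)\le\varepsilon$ and set $\hat f_i=Ne_i+f_i$ for $1\le i\le k$. A short computation with the symplectic identities shows that $\{e_1,\hat f_1,\ldots,e_k,\hat f_k\}$ is again a symplectic basis. For this new basis each choice $\lambda_i\in\{e_i,\hat f_i,e_i+\hat f_i\}=\{e_i,Ne_i+f_i,(N+1)e_i+f_i\}$ is proportional to the generator $(1-\mu_i)e_i+\mu_if_i$ of $K_\mu$, with $\mu_i\in\{0,\,1/(N+1),\,1/(N+2)\}\subseteq[0,\varepsilon]$. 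Hence every associated subspace of the new basis equals $K_\mu$ for some $\mu\in[0,\varepsilon]^k$ and, by the previous paragraph, meets each $W\in\Omega$ only in $\{0\}$. The sole non-routine step is the enlargement argument; the remainder is linear-algebraic bookkeeping controlled by the free parameter $N$.
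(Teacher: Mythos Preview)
Your proof is correct and follows essentially the same approach as the paper: enlarge the subspaces in $\Omega$ to dimension $k$, use the first lemma to get a transversal Lagrangian $L$, invoke Lemma~\ref{lem:Properties-K-sub-mu}(iii) for an $\varepsilon$, and then shear the symplectic basis by $\hat f_i = Ne_i + f_i$ so that every associated subspace is some $K_\mu$ with $\mu\in[0,\varepsilon]^k$. The only cosmetic differences are that the paper enlarges $\Omega$ \emph{before} choosing $L$ (slightly cleaner, since then no care is needed to keep $\widetilde W\cap L=\{0\}$), and that you should take $N\in\mathbb F$ (e.g.\ an integer) rather than an arbitrary real, since $V$ is a vector space over the subfield $\mathbb F\subseteq\mathbb R$.
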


\begin{proof} Observe that it is sufficient to prove the theorem in the case that each subspace in $\Omega$ is replaced by a subspace of dimension $k$ which contains it. 
Let $L$, the symplectic basis $\{e_1, f_1, \dots, e_k, f_k\}$  and the family $K_\mu$ be as described in Lemma \ref{lem:Properties-K-sub-mu}.
Fix a positive integer $p$ 
so that $K_\mu$ is transversal to every subspace in $\Omega$ for each $\mu \in [0, \tfrac{1}{p}]^k$.
Set $\hat{f_i}=p(e_i+\frac 1p f_i)$. Then the set $\{e_1,\hat{f_1}, \dots, e_k, \hat{f_k}\}$ is also a symplectic basis for $V$. 

Further, any subspace associated to this basis is spanned by  a set of the form $\{u_1, \dots, u_k\}$ where $u_i$ is one of the vectors 
$$e_i, \hat{f_i}=p(e_i+\frac 1pf_i) \text{ or } e_i+\hat{f_i}=(p+1)\left(e_i+\frac 1{(p+1)}f_i\right). $$
Thus each associated subspace is of the form $K_{\mu}$ with $\mu\in  [0, \tfrac{1}{p}]^k$ and so meets every subspace $W \in \Omega$ trivially.
\end{proof}
\end{section}
\begin{section}{Proof of the Theorem}
\label{Proof-theorem}
Suppose that $Q$ is a finitely generated nilpotent group of class 2. 
Write $Q$ as a subdirect product with factors $Q_1$, \dots, $Q_m$ 
where $Q_1$  is finite and the remaining factors are torsion-free groups with infinite cyclic centre
and so generalized Heisenberg groups (see Proposition \ref{subdir}).
We then have epimorphisms $\pi_i\colon Q \rightarrow Q_i$ and $\bigcap_{i=1}^m \ker(\pi_i)$ is trivial.

We shall construct the group $G$ by forming a module $A$ for $Q$ and then defining $G$ as a split extension of $A$ by $Q$. We will take, for each $i$,  a suitable module $B_i$ for $Q_i$ and then regard $B_i$ as a $Q$-module via $\pi_i$. The module $A$ will then be the the direct sum of the $B_i$. We must exercise considerable care, however, to ensure that the resulting module $A$ is tame and hence that $G$ is finitely presented.

The factor $Q_1$ is finite;  
order the remaining factors so that for $i \leq j$
the rank of $Q_i$, as generalized Heisenberg group, is less than or equal to the rank of $Q_j$.  
For the finite factor $Q_1$ the module $B_1$ will be the group ring of $Q_1$; 
note that $\Delta(Q_1; B_1)$ is reduced to 0. 
In all other cases, 
the module will be a modified version of the modules $A$ constructed for generalized Heisenberg groups 
in Section \ref{modules}. 
If $Q_i$ is cyclic, we can use such a module for $B_i$ without further adjustment. In this case, the module $B_i$ is finitely generated as abelian group and the corresponding $\Delta(Q_i; B_i)$ is again reduced to the origin. 
For the other $B_i$ we will need to take more care to ensure that the resulting direct sum of the modules $B_i$ is tame.

Suppose that, for some $l$ with $1\le l\le m$, 
we have constructed modules $B_1, \dots, B_{l-1}$ for $Q$ 
so that the set 
\[
\Delta(Q; B_1\oplus\dots\oplus  B_{l-1})=\Delta(Q; B_1)\cup\dots\cup  \Delta(Q; B_{l-1})
\]
contains no lines
and is contained in a finite collection $\Omega$ of subspaces of $Q^{*}$, 
each of dimension no more than the rank $k$ of $H = Q_l$
and spanned by at most $k$ characters with images in $\mathbb{Q}$. 
In order to describe our construction of the module $B_l$, 
we need to consider symplectic forms on $H$ 
and on $\Hom(H,\mathbb Q)$; 
we shall denote $\Hom(H,\mathbb Q)$ by $H^{\#}$. 
The projection $\pi = \pi_l \colon  Q\twoheadrightarrow H = Q_l$  yields an embedding $H^* \rightarrowtail Q^*$ 
and the inclusion of the rational numbers in the reals yields a further embedding $H^{\#} \rightarrowtail H^*$. 
We shall treat these embeddings as inclusions.

We have observed in the proof of Proposition \ref{subdir} that the quotient $H/Z$ by the centre
has a natural symplectic form given by commutation. 
This yields an embedding $\rho\colon H/Z \rightarrow H^{\#}$ defined by
$$\rho(h_1Z)[h_2] = (h_1Z, h_2Z)$$
and the image of $\rho$ in $H^{\#}$ is a lattice in $H^{\#}$ of full rank; 
that is, it contains a basis of $H^{\#}$. It follows that $H^{\#}$ inherits a symplectic form $\beta$ satisfying 
\[
\beta(\rho(h_1Z), \rho(h_2Z))=(h_1Z,h_2Z) =s \text{ exactly when } [h_1,h_2]=z^s
\]
where $z$ is the chosen generator of $Z$.

Let $\hat{\Omega}=\{ W\cap H^{\#}: W \in \Omega\}$. 
Then $\hat\Omega$ is a finite collection of subspaces of $H^{\#}$ each of which has dimension no more than $k$
(recall that each $W \in \Omega$ is spanned by at most $k$ elements of $H^{\#}$).
Thus we can apply Proposition \ref{thm:Existence-simultaneous-complement} 
(with $\mathbb F = \mathbb Q$) to find a symplectic basis $\mathcal C$ for $V = H^{\#}$ 
so that each associated subspace of this symplectic basis avoids each element of $\hat\Omega$ 
and hence avoids each element of $\Omega$.

We shall now show how to construct the module $B_l$. 
As $\rho(H/Z)$ is a full lattice in $H^{\#}$, 
there is a positive  integer $j$ so that $j\mathcal C \subseteq \rho(H/Z)$. 
Thus there is a subset $\mathcal D \subseteq H$ so that $\rho$ maps $\{dZ \colon d\in \mathcal D\}$ 
bijectively onto $j\mathcal C$.
It follows easily that $\mathcal D$ is a symplectic basis for a normal subgroup $P$ of finite index in $ Q_l= H$. 
We then proceed with the construction of Section \ref{modules} to produce a module $B$ for $P$ 
so that the set  $\Delta(B)\subseteq P^*=Q_l^*$ 
will lie in the union of the subspaces associated to the symplectic basis $\mathcal D$ of the Heisenberg group $P$; 
because $\mathcal C =j^{-1} \rho(\mathcal D)$ these associated subspaces coincide with the subspaces associated to the symplectic basis $\mathcal C$ of the vector space $V = H^{\#}$.
We then induce $B$ from $P$ to $Q_l$ to obtain the module $B_l$.
It follows from formula \eqref{eq:Induced} and the fact 
that $\iota \colon P \leq H$ induces an isomorphism $\im^* \colon H^* \to P^*$
that $\Delta(Q_l;B_l)$ lies in the union of the subspaces 
associated to the symplectic basis $\mathcal C$.

We have now constructed a module $B_l$ for $Q_l$ so that $\Delta(Q_l ; B_l)$ contains no lines.
We pull $B_l$ back to $Q$;
formula \eqref{eq:Pullback} then shows that $\Delta(Q;B_l)$ contains no lines either
and that it lies in a finite union of subspaces each of which intersects the subspaces in $\Omega$ trivially.
Formula \eqref{eq:Direct-sum} then implies that 
$$\Delta(Q; B_1\oplus \dots \oplus  B_l)=\Delta(Q; B_1)\cup \dots\cup \Delta(Q;  B_l)$$
contains no lines. Moreover,
our construction shows that $\Delta(Q; B_1\oplus \dots \oplus  B_l)$ lies in the finite union of subspaces,
each either a member of $\Omega$ or one of the associated subspaces of the symplectic basis $\mathcal{C}$,
and generated by at most $k$ elements of $Q^{\#}$.

 We have completed the inductive step 
 and so we can now deduce that, if $A=B_1\oplus \dots \oplus  B_m $ 
 then $A$ is finitely generated as $\mathbb ZQ$-module and $\Delta(Q; A)$ contains no lines. 
 Further, if $\mathcal A$ is a finite generating set for $A$ then $Q'$ acts on $A$ in such a way that the split extension of $A$ by $Q'$ is locally polycyclic.  Thus $\Delta(Q'; \mathcal A\cdot \mathbb ZQ')$ is reduced to the origin and so certainly contains no lines. 
Thus, recalling Definition \ref{tame}, $A$ is tame. 

The group $G$ required by the theorem  is the split extension of $A$ by $Q$ and we deduce from Theorem \ref{RSthm} that it is finitely presented. The final step is to observe that, if $A$ were not the Fitting subgroup of $G$, then some non-trivial element of the centre of $Q$ would act nilpotently on $A$. 
The image of this element in some $Q_i$ would be non-trivial and still central and  would act nilpotently on $B_i$. 
But the construction of the action of the centre of each $Q_i$ makes it clear that, in all cases, 
no non-trivial element of the centre acts nilpotently on $B_i$. 
The proof is complete. 
\end{section}
\bibliography{Groves-Strebel_Fitting-quotients-rev}
\bibliographystyle{amsplain}
\end{document}